\documentclass{publicadapt}

\usepackage{amsmath}
\usepackage{amssymb}
\usepackage{amsthm}%
\usepackage[OT1]{fontenc}%

\allowdisplaybreaks%

{\theoremstyle{plain}%
 \newtheorem{theorem}{Theorem}
 
 \newtheorem{lemma}{Lemma}%
}

\newcommand{\pfork}{\mathord{\pitchfork}}

{\theoremstyle{remark}
\newtheorem{remark}{Remark}
}

{\theoremstyle{definition}
\newtheorem{definition}{Definition}

}

 \title{On a conjecture on $k$-antichains in the unit $n$-cube}

 \author{John M.\ Campbell}
 \address{Department of Mathematics and Statistics, Dalhousie University, Halifax, Nova Scotia, Canada}
 \email{jh241966@dal.ca}

 \date{June 26, 2026}

 \keywords{$k$-antichain, Hausdorff measure, Stieltjes measure, Lebesgue measure}
 \subjclass{28A78, 05D05}

\begin{document}

 \begin{abstract}
 Let $[0, 1]^{n} \subseteq \mathbb{R}^{n}$ be endowed with its pointwise order, and let $k$ be a positive integer. A subset $A$ of $[0, 1]^{n}$ is said to be a \emph{$k$-antichain} if 
 $\operatorname{card}(A \cap C) \leq k$ for each chain $C \subseteq [0, 1]^{n}$. Letting $\mathcal{H}^{m}$ denote the $m$-dimensional Hausdorff outer measure, Pelekis and Vlas\'{a}k 
 [Publ.\ Math.\ Debrecen, 2020] conjectured that there exists a $k$-antichain $A \subseteq [0, 1]^{n}$ satisfying $\mathcal{H}^{n-1}(A) = k n$, and proved the special case of this 
 conjecture for $n = 2$, whereas Janzer [Mathematika, 2020] proved the $k = 1$ case of Pelekis and Vlas\'{a}k's conjecture. This conjecture is motivated by a result due to Erd\H{o}s on 
 $k$-antichains in $\{ 0, 1 \}^{n}$. We prove Pelekis and Vlas\'{a}k's conjecture in full generality, thus establishing that their upper bound $\mathcal{H}^{n-1}(A) \leq k n$ is sharp for 
 $k$-antichains $A$ in $[0, 1]^{n}$. 
\end{abstract}

\maketitle

\section{Introduction}
 The purpose of this paper is to prove a conjecture that was given by Pelekis and Vlas\'{a}k in 2020 \cite{PelekisVlasak2020}. This conjecture is given after the preliminaries covered below. 

 Let $\text{{\bf x}} = (\text{{\bf x}}_1, \text{{\bf x}}_2, \ldots, \text{{\bf x}}_n)$ and $\text{{\bf y}} = (\text{{\bf y}}_1, \text{{\bf y}}_2, \ldots, \text{{\bf y}}_n)$ be tuples 
 in $\mathbb{R}^{n}$. Define the relation $\leq$ so that $\text{{\bf x}} \leq \text{{\bf y}}$ if $\text{{\bf x}}_i \leq \text{{\bf y}}_i$ holds for all indices $i \in \{ 1, 2, \ldots, n \}$. For 
 $S \subseteq \mathbb{R}^{n}$, a subset $C \subseteq S$ is said to be a \emph{chain} in $S$ if: For all $\text{{\bf x}}, \text{{\bf y}} \in C$, it is the 
 case that $\text{{\bf x}} \leq \text{{\bf y}}$ or $\text{{\bf y}} \leq \text{{\bf x}}$. 

 For a subset $S$ of $\mathbb{R}^{n}$, we adopt the usual notational convention whereby $\operatorname{diam} \, S = \sup\{ \| \text{{\bf x}} - \text{{\bf y}} \| : \text{{\bf x}}, 
 \text{{\bf y}} \in S \}$, for the usual Euclidean norm $\| \cdot \|$ on $\mathbb{R}^{n}$. 
 We then set 
\begin{equation*}
 \mathcal{H}_{\delta}^{s}(A) :=
 \inf\left\{ \sum_{j=1}^{\infty} \frac{\pi^{\frac{s}{2}}}{\Gamma\left( \frac{s}{2} + 1 \right)} 
 \left( \frac{ \operatorname{diam} \, C_{j} }{2} \right)^{s}
 : A \subseteq \bigcup_{j=1}^{\infty} C_{j}, \, \operatorname{diam} \, C_{j} \leq \delta \right\} 
\end{equation*}
 for $A \subseteq \mathbb{R}^{n}$ and $s \in [0, \infty)$ and $\delta \in (0, \infty]$ \cite[p.\ 81]{EvansGariepy2015}. By then setting
\begin{equation}\label{defineHausdorff}
 \mathcal{H}^{s}(A) := \lim_{\delta \to 0} \mathcal{H}_{\delta}^{s}(A),
\end{equation}
 the function defined in \eqref{defineHausdorff} is referred to as the \emph{$s$-dimensional Hausdorff measure on $\mathbb{R}^{n}$}. 
 The definition in \eqref{defineHausdorff} 
 is such that $\mathcal{H}^{0}(\cdot)$ is the counting measure. 

\begin{remark}
 Throughout this paper, \(\mathcal H^s\) denotes the \emph{normalized} Hausdorff measure, so that \(\mathcal H^m\) agrees with the $m$-dimensional Lebesgue measure
 $ \mathcal{L}^m$ on Borel subsets of \(\mathbb R^m\), for every positive integer $m$. 
\end{remark}
 
\begin{definition}\label{defineantichain}
 For a positive integer $k$ and a subset $S$ of $\mathbb{R}^{n}$, a \emph{$k$-antichain} in $S$ is a subset $A \subseteq S$ such that $\mathcal{H}^{0}(A \cap C) \leq k$, for each 
 chain $C \subseteq S$. A 1-antichain may also be referred to as an \emph{antichain}. 
\end{definition}

 As suggested by Pelekis and Vlas\'{a}k \cite{PelekisVlasak2020}, much about the interest in Definition \ref{defineantichain} is given by how this may be seen as giving rise to continuous 
 analogues of problems and results from the field of extremal set theory. In this direction, 
 we begin by observing that statements on families of subsets $\mathcal{F} \subseteq 2^{\{ 1, 2, \ldots, n \}}$
 are equivalent to statements on 
 families of subsets $F \subseteq \{ 0, 1 \}^{n}$, according to the usual correspondence via indicator functions. 
 In this direction, a remarkable result from the discipline of extremal set theory
 concerning $k$-antichains in $\{ 0, 1 \}^{n}$ is due to Erd\H{o}s \cite{Erdos1945} and gives us (in an equivalent way) that:
 If $k \in \{ 1, 2, \ldots, n \}$, and if $A$ is a $k$-antichain in $\{ 0, 1 \}^{n}$, then
\begin{equation}\label{Erdosbound} 
 \mathcal{H}^{0}(A) \leq \sum_{i=1}^{k} \binom{n}{ \left\lfloor \frac{n-k}{2} \right\rfloor + i }. 
\end{equation}
 As noted by Pelekis and Vlas\'{a}k, the bound in \eqref{Erdosbound} is sharp. 

 It was proved by Engel et al.\ \cite{EngelMitsisPelekisReiher2020} that: If $A$ is an antichain in $[0, 1]^{n}$, then
\begin{equation}\label{upperboundn}
 \mathcal{H}^{n-1}(A) \leq n. 
\end{equation}
 Moreover, it was conjectured by Engel et al.\ that there exists an antichain in $[0, 1]^{n}$ satisfying $\mathcal{H}^{n-1}(A) = n$. This conjecture was subsequently proved by Janzer 
 \cite{Janzer2020}. By analogy with \eqref{upperboundn}, it was proved by Pelekis and Vlas\'{a}k \cite{PelekisVlasak2020} that: For each positive integer $k$, if $A$ is a $k$-antichain 
 in $[0, 1]^{n}$, then
\begin{equation}\label{upperkn} 
 \mathcal{H}^{n-1}(A) \leq k n. 
\end{equation}
 Pelekis and Vlas\'{a}k also conjectured that there exists a $k$-antichain in $[0, 1]^{n}$ with a $\mathcal{H}^{n-1}$-measure equal to $kn$, i.e., so that the upper bound in 	 
 \eqref{upperkn} is always sharp. We succeed in proving this conjecture in full generality, whereas Janzer solved the $k = 1$ case and Pelekis and Vlas\'{a}k solved the $n = 2$ case. To the 
 best of our knowledge, this conjecture has remained open, prior to our solution, with regard to extant literature related to 
 the above referenced works by Pelekis and Vlas\'{a}k, by Engel et al., and by Janzer. 
 Our construction is largely based on our extensive interactions with GPT-5.5 Pro. 

\section{Further preliminaries}
 In addition to the preliminaries above, the following preliminaries are necessary for the purposes of our construction, referring to the Evans--Gariepy text 
 \cite{EvansGariepy2015} and further measure theory texts for details. As below, we let $\mathcal B(X)$ 
 denote the 
 Borel $\sigma$-algebra on $X$, for a given subset $X$ of $\mathbb{R}^{d}$. 
 
\begin{definition}\label{formalsingular}
 Let \(\mu\) and \(\nu\) be Borel measures on \(X\subseteq\mathbb R^d\). The measure
 \(\mu\) is said to be \emph{singular} with respect to \(\nu\) 
 if there exists a Borel set \(E\in\mathcal B(X)\) such that
 both $\nu(E)=0$ and 
 $\mu(X\setminus E)=0$ hold. 
 In this case, we write $
\mu\perp\nu$. 
 \end{definition}

 For a vector-valued Radon measure \(\lambda\), we write \(\lambda\perp\nu\) to mean \(|\lambda|\perp\nu\).

\begin{definition}\label{defineLSm}
Let \(I=[a,b]\subseteq\mathbb R\), and let
\(\sigma:I\to\mathbb R\) be nondecreasing and continuous. Extend
 \(\sigma\) to the mapping \(\widetilde\sigma:\mathbb R\to\mathbb R\) defined by
$$
\widetilde\sigma(t)=
\begin{cases}
\sigma(a), & t<a,\\
\sigma(t), & a\le t\le b,\\
\sigma(b), & t>b.
\end{cases}
$$
The \emph{Lebesgue--Stieltjes measure} associated with \(\sigma\), denoted 
 \(\mu_\sigma\) or \(d\sigma\), is the restriction to \(I\) of the unique
Borel measure \(\mu_{\widetilde\sigma}\) on \(\mathbb R\) satisfying
\begin{equation}\label{forseparaterem}
\mu_{\widetilde\sigma}((s,t])
 =
\widetilde\sigma(t)-\widetilde\sigma(s)
\end{equation}
 for $s < t$. 
\end{definition}

\begin{remark}
 Since \(\sigma\), as in Definition \ref{defineLSm}, is continuous, the condition in \eqref{forseparaterem} 
 is equivalent to 
$
\mu_\sigma([s,t])
=
\sigma(t)-\sigma(s)
$ holding for 
$ a \leq s \leq t \leq b$. 
\end{remark}

 As below, we let $C_c^1(\Omega;\mathbb R^m)$ denote the space of continuously differentiable, compactly supported $m$-vector fields on $\Omega$, and we let 
 $ \operatorname{div}\varphi = \sum_{i=1}^m \frac{\partial \varphi_i}{\partial x_i}$.

\begin{definition}\label{defineBV}
 Let $\Omega\subseteq\mathbb R^m$ be open, and let $u\in L^1(\Omega)$. The \emph{variation} of $u$ in $\Omega$ is defined by
$$ V(u,\Omega) := \sup\left\{
\int_{\Omega} u\,\operatorname{div}\varphi \, d\mathcal L^m : \varphi\in C_c^1(\Omega;\mathbb R^m),\
|\varphi(x)|\leq 1 \text{ for all } x\in\Omega
\right\}. $$
We say that $u$ is a \emph{function of bounded variation} on $\Omega$,
and write $ u\in BV(\Omega)$, if $ V(u,\Omega)<\infty$. 
\end{definition}

 As in the below definition, for a given subset $\Omega$ of $\mathbb{R}^{m}$, 
 the expression $C_{c}^{\infty}(\Omega)$ denotes the 
 space of smooth, compactly supported real-valued functions on $\Omega$. 

\begin{definition}
 Let $ \Omega \subseteq \mathbb R^m$ be open, let \(u\in L^1_{\mathrm{loc}}(\Omega)\), and fix \(i\in\{1, 2, \ldots,m\}\). We say that the distributional partial derivative \(D_i u\) 
 is \emph{represented} by a finite signed Radon measure \(\mu_i\)
on \(\Omega\) if
\[
-\int_\Omega u(x)\frac{\partial \varphi}{\partial x_i}(x)\,d\mathcal L^m(x)
=
\int_\Omega \varphi\,d\mu_i
\]
for every \(\varphi\in C_c^\infty(\Omega)\). 
\end{definition}

\begin{definition} 
 Let $u\in BV(\Omega)$. Let $Du$ denote the distributional derivative of $u$. By the Lebesgue decomposition
 theorem, there is a unique decomposition
\begin{equation}\label{defineDs}
Du=\nabla u\,\mathcal L^m+D^{\operatorname{s}} u,
\end{equation}
where $\nabla u\,\mathcal L^m$ is absolutely continuous with respect to
$\mathcal L^m$ and where 
\begin{equation}\label{DsuperpLm} 
 D^{\operatorname{s}} u\perp\mathcal L^m. 
\end{equation}
 The measure $D^{\operatorname{s}} u$ in \eqref{defineDs} is called
 the \emph{singular part of the distributional derivative of $u$}. 
\end{definition}

\subsection{The subgraph perimeter formula for \(BV\) functions}\label{subsec:subgraph-perimeter-formula}
 We make use of a standard perimeter formula for subgraphs of \(BV\) functions.
 This formula is central to the proof of our main result, and we provide a derivation based 
 on a text by Giaquinta et al.\ \cite[\S4.1.5]{GiaquintaModicaSoucek1998}.
 A closely related subgraph characterization of \(BV\) functions traces back 
 to Miranda's work on generalized Cartesian surfaces and sets of locally finite perimeter on Cartesian products \cite{Miranda1964}, 
 but we use a more modern approach based on the cited Giaquinta--Modica--Sou\v{c}ek text
 \cite[\S4.1.5]{GiaquintaModicaSoucek1998}. 
 The main result cited from Giaquinta--Modica--Sou\v{c}ek is stated for base
 dimension \(m\ge2\), and this is the only range in which we shall use the integral formula in \eqref{subgraph-perimeter-formula} below. 

 Let $\Omega\subseteq \mathbb R^m$ be a bounded open set, and let $u$ be a Lebesgue-measurable representative of a function in $ BV(\Omega)$. Being consistent with the notation of 
 Giaquinta--Modica--Sou\v{c}ek \cite[p.\ 371]{GiaquintaModicaSoucek1998}, we define the subgraph of \(u\) by
$
\mathcal S_u
=
\{(x,y)\in\Omega\times\mathbb R:y<u(x)\}$, 
 and we let 
$
\pi:\Omega\times\mathbb R\to\Omega $ 
denote the orthogonal projection onto the first factor.
 If \(\eta\) is a finite signed Radon measure on
\(\Omega\times\mathbb R\), its \emph{pushforward} under \(\pi\), denoted
\(\pi_{\#}\eta\), is the finite signed Radon measure on \(\Omega\) defined by
$
(\pi_{\#}\eta)(B)
=
\eta(\pi^{-1}(B))
=
\eta(B\times\mathbb R)
$
 for every Borel set \(B\subseteq\Omega\). For vector-valued measures, the
 pushforward is understood componentwise.

 Again following Giaquinta--Modica--Sou\v{c}ek \cite[p.\ 371]{GiaquintaModicaSoucek1998}, 
 we define the \(\mathbb R^{m+1}\)-valued measure
\begin{equation}\label{muDu}
\mu_u
=
\mu(Du)
:=
(D_1u, D_2u, \ldots,D_m u,-\mathcal L^m).
\end{equation}
 From the same Giaquinta--Modica--Sou\v{c}ek text \cite[\S4.1.5, Theorem~1]{GiaquintaModicaSoucek1998}, we have the biconditional equivalence
 such that \(u\in BV(\Omega)\)
if and only if \(\mathcal S_u\) has finite perimeter in
\(\Omega\times\mathbb R\). In this case, 
\begin{equation}\label{GMS-pushforward}
\pi_{\#}|D\chi_{\mathcal S_u}|
=
|\mu_u|.
\end{equation}
Equivalently, the equality 
\begin{equation}\label{local-subgraph-perimeter-measure}
|D\chi_{\mathcal S_u}|(B\times\mathbb R)
=
|\mu_u|(B) 
\end{equation}
 holds for every Borel set \(B\subseteq\Omega\). 
 In particular, taking \(B=\Omega\), we obtain
\begin{equation}\label{subgraph-perimeter-vector-measure}
P(\mathcal S_u;\Omega\times\mathbb R)
=
|\mu_u|(\Omega).
\end{equation}

We now rewrite the right-hand side of
\eqref{subgraph-perimeter-vector-measure} in the usual \(BV\) notation. As above, the
Lebesgue decomposition theorem gives us that \eqref{defineDs} holds, 
 again with \eqref{DsuperpLm} holding. 
 So, from \eqref{muDu}, we obtain that 
\begin{equation}\label{applymuDu}
\mu_u
=
(\nabla u,-1)\,\mathcal L^m
+
(D^{\operatorname{s}}u,0).
\end{equation}
 The two measures on the right-hand side of \eqref{applymuDu} are mutually singular. So, 
their total variations add, and we thus derive 
\begin{equation}\label{firstsqrt}
 |\mu_u|
=
\sqrt{1+|\nabla u|^2}\,\mathcal L^m
+
|D^{\operatorname{s}}u| 
\end{equation}
 from \eqref{applymuDu}. 
Combining \eqref{subgraph-perimeter-vector-measure} and \eqref{firstsqrt} then gives
\begin{equation}\label{subgraph-perimeter-formula}
P(\mathcal S_u;\Omega\times\mathbb R)
=
\int_{\Omega}\sqrt{1+|\nabla u|^2}\,d\mathcal L^m
+
|D^{\operatorname{s}}u|(\Omega).
\end{equation}

 We emphasize that \eqref{subgraph-perimeter-formula} is a perimeter formula for the subgraph, not a direct Hausdorff-measure formula for the graph. However, if $u$ is continuous, then 
 the relative topological boundary of
\(\mathcal S_u\) in \(\Omega\times\mathbb R\) is precisely 
 $
\operatorname{graph}u
=
\{(x,u(x)):x\in\Omega\}$. 
Since the reduced boundary is contained in the topological boundary, the
standard structure theorem for sets of finite perimeter gives
$
P(\mathcal S_u;\Omega\times\mathbb R)
=
\mathcal H^m(\partial^*\mathcal S_u\cap(\Omega\times\mathbb R))
\leq
\mathcal H^m(\operatorname{graph}u)$. 
Thus, in the continuous case, the subgraph perimeter formula gives the lower
bound
\begin{equation}\label{subgraph-lower-bound-graph}
\int_{\Omega}\sqrt{1+|\nabla u|^2}\,d\mathcal L^m
+
|D^{\operatorname{s}}u|(\Omega)
\leq
\mathcal H^m(\operatorname{graph}u).
\end{equation}

\section{A full construction}\label{secfull}

\begin{lemma}\label{lem:finite-partials-imply-BV}
Let \(\Omega\subseteq\mathbb R^m\) be open, and let \(u\in L^1(\Omega)\).
Suppose that, for each \(i\in\{1,2,\ldots,m\}\), the distributional partial
derivative \(D_i u\) is represented by a finite signed Radon measure on
\(\Omega\). Then \(u\in BV(\Omega)\) and 
$
Du=(D_1u,D_2u,\ldots,D_m u)
$ 
is a finite \(\mathbb R^m\)-valued Borel measure on \(\Omega\), and
\[
V(u,\Omega)\leq \sum_{i=1}^m |D_i u|(\Omega)<\infty.
\]
\end{lemma}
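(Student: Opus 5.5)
The plan is to bound the supremum in Definition \ref{defineBV} directly, using the representation of each $D_i u$ and a componentwise estimate. Fix $\varphi=(\varphi_1,\ldots,\varphi_m)\in C_c^1(\Omega;\mathbb R^m)$ with $|\varphi(x)|\le 1$ for all $x$. Then $\int_\Omega u\,\operatorname{div}\varphi\,d\mathcal L^m=\sum_{i=1}^m\int_\Omega u\,\partial_i\varphi_i\,d\mathcal L^m$. If the representation identity could be applied to each $\varphi_i$, it would give $\int_\Omega u\,\partial_i\varphi_i\,d\mathcal L^m=-\int_\Omega\varphi_i\,d\mu_i$. Since $|\varphi_i|\le|\varphi|\le 1$, we would then have $\bigl|\int_\Omega\varphi_i\,d\mu_i\bigr|\le|\mu_i|(\Omega)$. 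Summing over $i$ and taking the supremum over $\varphi$ yields $V(u,\Omega)\le\sum_i|D_iu|(\Omega)$, which is finite because each $\mu_i$ is a finite signed Radon measure. This gives $u\in BV(\Omega)$.

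The main technical point is that the representation is only assumed for test functions in $C_c^\infty(\Omega)$, whereas $\varphi_i$ is merely $C_c^1$. I will handle this by mollification. Let $\rho_\varepsilon$ be a standard mollifier and set $\varphi_i^\varepsilon=\rho_\varepsilon*\varphi_i$. For $\varepsilon$ smaller than the distance from $\operatorname{supp}\varphi$ to $\partial\Omega$ (or any $\varepsilon$ if $\Omega=\mathbb R^m$), the supports of the $\varphi_i^\varepsilon$ all lie in one fixed compact subset $K\subset\Omega$. The convergences $\varphi_i^\varepsilon\to\varphi_i$ and $\partial_i\varphi_i^\varepsilon=\rho_\varepsilon*\partial_i\varphi_i\to\partial_i\varphi_i$ are uniform on $K$.

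We then pass to the limit on both sides of the identity for $\varphi_i^\varepsilon$:
\begin{itemize}
\item On the left, $u\in L^1(\Omega)$ and the derivatives converge uniformly with supports in $K$, so $\int u\,\partial_i\varphi_i^\varepsilon\to\int u\,\partial_i\varphi_i$ by dominated convergence (or directly by the $L^1$–$L^\infty$ bound).
\item On the right, $|\mu_i|$ is finite and $\varphi_i^\varepsilon\to\varphi_i$ uniformly, so $\int\varphi_i^\varepsilon\,d\mu_i\to\int\varphi_i\,d\mu_i$.
\end{itemize}
Hence the identity extends to $C_c^1$ test functions. Alternatively, one can note directly that $|\rho_\varepsilon*\varphi|\le 1$ pointwise (Jensen, since $\rho_\varepsilon\ge0$ has unit mass) and bound with the mollified field, but the limiting argument above suffices.

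Finally, I will identify $Du$ with $(\mu_1,\ldots,\mu_m)$. By definition, the distributional derivative of $u\in BV(\Omega)$ is the vector measure whose $i$-th component satisfies the same integration-by-parts identity against $C_c^\infty$ functions. Finite signed Radon measures are uniquely determined by their action on $C_c^\infty(\Omega)$, by density in $C_c(\Omega)$ in the sup norm together with the Riesz representation theorem. Therefore $D_iu=\mu_i$, and $Du=(D_1u,\ldots,D_mu)$ is a finite $\mathbb R^m$-valued Borel measure. I expect no real obstacle beyond the $C_c^1$-versus-$C_c^\infty$ approximation step.
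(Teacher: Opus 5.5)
Your proposal is correct and follows the same route as the paper: extend the representation identity from $C_c^\infty$ to $C_c^1$ test functions by approximation, then bound $\int_\Omega u\,\operatorname{div}\varphi\,d\mathcal L^m$ componentwise by $\sum_{i}|D_iu|(\Omega)$ using $|\varphi_i|\le 1$ and take the supremum. Where the paper only sketches the approximation and asserts that $Du=(D_1u,\ldots,D_mu)$, you carry out the mollification explicitly and justify the identification of $Du$ with $(\mu_1,\ldots,\mu_m)$ by uniqueness of finite signed Radon measures determined by their action on $C_c^\infty(\Omega)$.
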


\begin{proof}
Let $\varphi = (\varphi_1,\varphi_2, \ldots, \varphi_m)\in C_c^1(\Omega;\mathbb R^m)$
satisfy \(|\varphi(x)|\leq 1\) for all \(x\in\Omega\). 
 The distributional identity, initially stated for
 functions in \(C_c^\infty(\Omega)\), extends to \(C_c^1(\Omega)\), by approximating 
 each $\varphi_i$ in $C^1$ by a smooth function with support contained in a
 fixed compact subset of \(\Omega\), and by then using
 the finiteness of $D_i u$ together with $u$ being in $ L^1(\Omega)$. 
 The definition of a distributional partial derivative gives us the equality 
\begin{equation}\label{finitesumpartial}
\int_\Omega u\,\operatorname{div}\varphi\,d\mathcal L^m
=
-\sum_{i=1}^m \int_\Omega \varphi_i\,dD_i u. 
\end{equation} 
 Since \(|\varphi_i|\leq 1\) for $i \in \{ 1, 2, \ldots, m \}$, it follows from \eqref{finitesumpartial} that
\begin{equation}\label{applysup}
\left|
\int_\Omega u\,\operatorname{div}\varphi\,d\mathcal L^m
\right|
\leq
\sum_{i=1}^m |D_i u|(\Omega).
\end{equation}
 Taking the supremum on the left-hand side of \eqref{applysup}
 for $\varphi \in C_c^1(\Omega;\mathbb R^m)$, 
 we obtain that 
$$
V(u,\Omega)\leq \sum_{i=1}^m |D_i u|(\Omega)<\infty, 
$$
 i.e., so that $u \in BV(\Omega)$. Since each component \(D_i u\) is a finite signed
 Radon measure, the tuple $Du = (D_1u, D_2u, \ldots,D_mu)$ is a finite
\(\mathbb R^m\)-valued Borel measure.
\end{proof}

 A full definition of the \emph{Cantor--Lebesgue function} \(\gamma:[0,1]\to[0,1]\)
 involved in the below construction may be found in a number of texts in measure theory and real analysis
 (as in texts by Yeh \cite[pp.\ 76--77]{Yeh2000}
 \cite[pp. 87--88]{Yeh2006}). 
 This provides a standard example of a nondecreasing, continuous, singular function on $[0, 1]$.

\begin{lemma}\label{lemmasigma}
There exists a strictly increasing homeomorphism $ \sigma:[0,1]\to[0,1]$
 such that 
$\sigma(0)=0$ and $\sigma(1)=1$ and 
 whose associated Lebesgue--Stieltjes measure is singular with respect to
one-dimensional Lebesgue measure.
\end{lemma}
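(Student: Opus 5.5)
We will obtain \(\sigma\) from the Cantor--Lebesgue function \(\gamma\) mentioned just before the statement. \(\gamma\) itself is not strictly increasing, since it is constant on each removed middle-third interval, so we will perturb it. The plan is to set
\[
\sigma(t) := \frac{\gamma(t) + \tau(t)}{2}, \qquad t \in [0,1],
\]
where \(\tau\) is a strictly increasing, continuous, singular function with \(\tau(0)=0\) and \(\tau(1)=1\). If such a \(\tau\) is available, then \(\sigma\) is continuous and strictly increasing and fixes \(0\) and \(1\). Hence \(\sigma\) is a strictly increasing homeomorphism of \([0,1]\), because a continuous strictly increasing bijection of a compact interval has a continuous inverse. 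Moreover \(\mu_\sigma = \tfrac12(\mu_\gamma + \mu_\tau)\) by uniqueness in Definition \ref{defineLSm}. A sum of two measures, each singular to \(\mathcal L^1\), is singular to \(\mathcal L^1\): take the union of the two null carrier sets.

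This reduces everything to constructing a strictly increasing singular \(\tau\), and we could then simply use \(\sigma=\tau\). The cleanest route avoids \(\gamma\) and builds \(\tau\) as a \emph{dense} sum of translated Cantor functions. Enumerate the rationals \(q_1,q_2,\ldots\) in \((0,1)\). Define
\[
\tau_0(t) = \sum_{j\ge1} 2^{-j}\,\gamma\bigl(\min\{1,\max\{0,\,3(t-q_j)\}\}\bigr)\cdot\text{(restricted appropriately)}.
\]
A simpler variant is the following. Set \(\mu := \sum_j 2^{-j}\,(\phi_j)_{\#}\mu_\gamma\), where \(\phi_j\) is an affine map sending \([0,1]\) onto a small interval around \(q_j\) contained in \([0,1]\). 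Then define \(\tau(t) := \mu([0,t])/\mu([0,1])\).

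Each summand is a continuous (atomless) measure carried by an \(\mathcal L^1\)-null affine copy of the Cantor set. A countable union of these copies is still \(\mathcal L^1\)-null, so \(\mu\perp\mathcal L^1\) and \(\mu\) is atomless; therefore \(\tau\) is continuous and singular. Strict monotonicity requires that every nondegenerate interval \([s,t]\) has positive \(\mu\)-measure. We will check this by choosing the intervals around \(q_j\) so that, within \([s,t]\), some scaled Cantor set has its initial point. More precisely, take \(\phi_j(x) = q_j + r_j x\) with \(r_j>0\) small enough that \(q_j + r_j \le 1\). The support of \((\phi_j)_{\#}\mu_\gamma\) is \(\phi_j(\text{Cantor set})\), which contains \(q_j\). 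Since \(\gamma(x)>0\) for every \(x>0\), this pushforward gives positive mass to \([q_j, q_j+\varepsilon)\) for every \(\varepsilon>0\). Given \(s<t\), pick \(q_j\in(s,t)\), and positivity follows.

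The main obstacle is rigor in two places: the singularity claim and matching the Lebesgue--Stieltjes measure of \(\tau\) with \(\mu\) restricted to \([0,1]\). For singularity, we use the carrier set \(E=\bigcup_j \phi_j(K)\), where \(K\) is the Cantor set. This set is Borel and \(\mathcal L^1\)-null, and \(\mu([0,1]\setminus E)=0\). For the identification, the distribution function of the finite atomless measure \(\mu/\mu([0,1])\) is exactly \(\tau\). By uniqueness in Definition \ref{defineLSm} this gives \(\mu_\tau = \mu/\mu([0,1])\), and so \(\mu_\tau\perp\mathcal L^1\) in the sense of Definition \ref{formalsingular}. Everything else, including \(\tau(0)=0\), \(\tau(1)=1\), and the homeomorphism property, is routine.
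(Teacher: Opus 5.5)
Your final construction is correct and is essentially the paper's: a weighted series $\sum_j 2^{-j}$ of affinely rescaled Cantor measures placed densely in $[0,1]$. Singularity comes from the $\mathcal L^1$-null union of the rescaled Cantor sets, and strict monotonicity comes from density. The only difference is in the density step: the paper indexes all rational subintervals $[a_r,b_r]$ so that one lies entirely inside $(x,y)$, whereas you use rational left endpoints and the fact that $\gamma>0$ on $(0,1]$. The opening reduction via $(\gamma+\tau)/2$ and the ill-defined display for $\tau_0$ are superfluous and should simply be deleted.
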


\begin{proof}
 As above, we let \(\gamma:[0,1]\to[0,1]\) be the Cantor--Lebesgue function. Now, let
 $K \subseteq [0,1]$ be the middle-thirds Cantor set. Enumerate all nondegenerate
closed intervals with rational endpoints contained in \((0,1)\) as
\begin{equation}\label{countclosed}
 \{[a_r,b_r]\}_{r=1}^{\infty}.
\end{equation}
For each $r$, define 
\begin{equation}\label{gammarx}
\gamma_r(x) =
\begin{cases}
0, & x\le a_r,\\[4pt]
\gamma\!\left(\dfrac{x-a_r}{b_r-a_r}\right),
& a_r\le x\le b_r,\\[10pt]
1, & x\ge b_r.
\end{cases}
\end{equation}
Set
\begin{equation}\label{sigmaseries}
\sigma(x)=\sum_{r=1}^{\infty}2^{-r}\gamma_r(x) 
\end{equation}
 for all $x \in [0, 1]$. 
 The series converges uniformly, giving us that \(\sigma\) is continuous. Since each
\(\gamma_r\) is nondecreasing, it follows that \(\sigma\) is nondecreasing. Moreover, 
 the definition in \eqref{sigmaseries}
 gives that 
 $ \sigma(0)=0$ and that $\sigma(1)=1$ in a direct way. 
 
 Letting $0 \le x < y \le1$, we proceed to fix an index 
$r$ satisfying $
x<a_r<b_r<y$, so that \eqref{gammarx} then gives us that 
 $\gamma_r(x) = 0$ and $\gamma_r(y)=1$. From \eqref{sigmaseries}, 
 we then have that 
$
\sigma(y)-\sigma(x)
\ge
2^{-r}\bigl(\gamma_r(y)-\gamma_r(x)\bigr)
=
2^{-r}>0$, i.e., so that $\sigma$ is strictly increasing. Since it is continuous and maps
\([0,1]\) onto \([0,1]\), it is a homeomorphism. 

It remains to prove that the associated Lebesgue--Stieltjes measure is singular. Let
\begin{equation}\label{defineKr} 
K_r=a_r+(b_r-a_r)K.
\end{equation}
The Lebesgue--Stieltjes measure associated with $\gamma_r$ is supported on
 $K_r$. Moreover, 
 the Lebesgue--Stieltjes measure $\mu_\sigma$ associated with
 $\sigma$ is such that: If $0 \leq s < t \leq 1$, then 
\begin{equation}\label{musigmadiff}
 \mu_\sigma((s,t])
=
\sigma(t)-\sigma(s)
=
\sum_{r=1}^{\infty}2^{-r}
\bigl(\gamma_r(t)-\gamma_r(s)\bigr)
=
\sum_{r=1}^{\infty}2^{-r}\mu_{\gamma_r}((s,t]). 
\end{equation}
 From \eqref{musigmadiff}, we find that 
\begin{equation}\label{musigmagamma} 
\mu_\sigma
=
\sum_{r=1}^{\infty}2^{-r}\mu_{\gamma_r}.
\end{equation}
 Now, define 
\begin{equation}\label{Ebigcup}
 E = \bigcup_{r=1}^{\infty}K_r.
\end{equation}
 From \eqref{defineKr}, we find that 
 $K_r$ has Lebesgue measure zero for each positive integer $r$. 
 Consequently, from \eqref{Ebigcup} 
 together with the countable subadditivity of measures, the vanishing 
$ \mathcal L^1(E)=0 
$ holds. On the other hand, each measure \(\mu_{\gamma_r}\) is supported on \(K_r\), and (using the expansion in \eqref{musigmagamma})
 we can then deduce that $\mu_\sigma$ is supported on $E$. This gives us that 
$\mu_\sigma([0,1]\setminus E)=0$. 
 This together with the vanishing of $\mathcal{L}^{1}(E)$ together give us  that $ \mu_\sigma\perp \mathcal L^1$.
\end{proof} 

 We henceforth assume that $n \geq 2$ unless otherwise specified, writing $m = n - 1$. 
 Our strategy is to construct an antichain in $[0, 1]^{n}$ of $\mathcal{H}^{n-1}$-measure $n$, 
 and then form $k$ antichains based on our original antichain. 

 Now, let $\sigma$ be a function satisfying the conditions in Lemma \ref{lemmasigma}. For a parameter $a > 0$, and for a tuple 
 $ \text{{\bf t}} = (\text{{\bf t}}_1, \text{{\bf t}}_2, \ldots, \text{{\bf t}}_m) \in (0, 1)^{m}$, let 
\begin{equation}\label{Phiat} 
 \Phi_{a}( \text{{\bf t}} ) = 
 \frac{a \prod_{j=1}^{m} (1 - \text{{\bf t}}_j)}{ a \prod_{j=1}^{m} (1 - \text{{\bf t}}_j) + \prod_{j=1}^{m} \text{{\bf t}}_j}.
\end{equation}
 By rewriting the right-hand side of \eqref{Phiat} so that 
 $$ \Phi_{a}(\text{{\bf t}}) = \left( 1 + \frac{1}{a} \prod_{j=1}^{m} \frac{\text{{\bf t}}_j}{1 - \text{{\bf t}}_{j}} \right)^{-1}, $$
 we then find that $\Phi_{a}(\text{{\bf t}})$ is strictly decreasing in each coordinate. 
 We proceed to set 
\begin{equation}\label{defineua}
 u_{a}(\text{{\bf x}}_1, \text{{\bf x}}_2, \ldots, \text{{\bf x}}_{m}) = 
 \Phi_{a}\big( \sigma(\text{{\bf x}}_{1}), \sigma(\text{{\bf x}}_{2}), \ldots, \sigma(\text{{\bf x}}_{m}) \big) 
\end{equation}
 for a given element $\text{{\bf x}}$ in $(0, 1)^{m}$.
 We then define
\begin{equation}\label{Gammaa}
 \Gamma_{a} = \left\{ \big( \text{{\bf x}}_1, \text{{\bf x}}_2, \ldots, \text{{\bf x}}_{m}, 
 u_{a}(\text{{\bf x}}_1, \text{{\bf x}}_2, \ldots, \text{{\bf x}}_{m}) \big) : \text{{\bf x}} \in (0, 1)^{m} \right\} 
 \subseteq (0, 1)^{n}. 
\end{equation}
 For convenience, we may rewrite the tuple $ \big( \text{{\bf x}}_1, \text{{\bf x}}_2, \ldots, \text{{\bf x}}_{m}, 
 u_{a}(\text{{\bf x}}_1, \text{{\bf x}}_2, \ldots, \text{{\bf x}}_{m}) \big) $ shown in 
 \eqref{Gammaa} as $(\text{{\bf x}}, u_{a}(\text{{\bf x}}))$. 

\begin{lemma}\label{oneantichain}
 For each $a > 0$, we have that $\Gamma_a$ is an antichain. 
\end{lemma}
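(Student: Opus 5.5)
To show that $\Gamma_a$ is an antichain, I will take two distinct points of $\Gamma_a$ that are comparable and derive a contradiction. By Definition \ref{defineantichain} with $k=1$, it suffices to show that no two distinct points of $\Gamma_a$ are comparable in the pointwise order. Suppose $(\text{{\bf x}}, u_a(\text{{\bf x}}))$ and $(\text{{\bf y}}, u_a(\text{{\bf y}}))$ are distinct points of $\Gamma_a$ with $(\text{{\bf x}}, u_a(\text{{\bf x}})) \leq (\text{{\bf y}}, u_a(\text{{\bf y}}))$. Since the first $m$ coordinates determine the point, we must have $\text{{\bf x}} \neq \text{{\bf y}}$. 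Hence $\text{{\bf x}}_j \leq \text{{\bf y}}_j$ for all $j$, with strict inequality for at least one index $j_0$.

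Next I transfer these inequalities through $\sigma$. Lemma \ref{lemmasigma} says $\sigma$ is strictly increasing with $\sigma(0)=0$ and $\sigma(1)=1$. So for $\text{{\bf x}} \in (0,1)^m$ we get $\sigma(\text{{\bf x}}_j) \in (0,1)$, which means $u_a$ is well defined via \eqref{Phiat}. We also get $\sigma(\text{{\bf x}}_j) \leq \sigma(\text{{\bf y}}_j)$ for every $j$, with strict inequality at $j_0$.

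The key monotonicity step uses the rewritten form of $\Phi_a$. Each factor $t/(1-t)$ is positive and strictly increasing on $(0,1)$. Therefore the product $P(\text{{\bf t}}) = \prod_{j} \text{{\bf t}}_j/(1-\text{{\bf t}}_j)$ is positive and nondecreasing in each coordinate, and it strictly increases when one coordinate strictly increases, because all the other factors are strictly positive. Changing the coordinates one at a time from $\sigma(\text{{\bf x}}_j)$ to $\sigma(\text{{\bf y}}_j)$ therefore gives $P(\sigma(\text{{\bf x}})) < P(\sigma(\text{{\bf y}}))$. Since $\Phi_a = (1 + P/a)^{-1}$ with $a>0$, this yields
\[
u_a(\text{{\bf y}}) = \Phi_a(\sigma(\text{{\bf y}})) < \Phi_a(\sigma(\text{{\bf x}})) = u_a(\text{{\bf x}}).
\]
This contradicts the assumption $u_a(\text{{\bf x}}) \leq u_a(\text{{\bf y}})$ in the last coordinate. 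By symmetry, the reverse comparison is ruled out in the same way, so $\Gamma_a$ is an antichain.

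The only point needing care is strictness: "strictly decreasing in each coordinate" must be applied along a chain of coordinate changes in which only some steps are strict. The one-coordinate-at-a-time argument above handles this, using the positivity of every factor. I do not expect any genuine obstacle beyond this.
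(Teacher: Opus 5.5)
Your proof is correct and follows essentially the same route as the paper. Both arguments assume two distinct comparable points, use that $\sigma$ is strictly increasing and $\Phi_a$ is strictly decreasing in each coordinate to get $u_a(\mathbf{y}) < u_a(\mathbf{x})$, and contradict the last-coordinate inequality. Your one-coordinate-at-a-time treatment of strictness, using positivity of the factors $t/(1-t)$, spells out a step the paper's proof leaves implicit.
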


\begin{proof}
 Let $\text{{\bf p}} = (\text{{\bf x}}, u_{a}(\text{{\bf x}}))$ and $\text{{\bf q}} = (\text{{\bf y}}, u_a(\text{{\bf y}}))$ be two unequal points in 
 $\Gamma_a$. Suppose that $\text{{\bf p}} \leq \text{{\bf q}}$
 in the pointwise order. Consequently, the relation $\text{{\bf x}}_i \leq \text{{\bf y}}_i$ holds
 for $i \in \{ 1, 2, \ldots m \}$, with at least one strict inequality unless $\text{{\bf x}} = \text{{\bf y}}$. 

 Since $\sigma$ (as in Lemma \ref{lemmasigma}) is strictly increasing, and since $\Phi_a$
 is strictly decreasing in each coordinate, we see that: If $\text{{\bf x}} < \text{{\bf y}}$, then 
\begin{equation}\label{strictcontradict}
 u_{a}(\text{{\bf x}}) > u_a(\text{{\bf y}}). 
\end{equation}
 However, since $\text{{\bf p}} \leq \text{{\bf q}}$,
 a comparison of the final coordinates of $\text{{\bf p}}$ and $\text{{\bf q}}$ then gives us that 
 $u_{a}(\text{{\bf x}}) \leq u_a(\text{{\bf y}})$, contradicting \eqref{strictcontradict}. 
 Since no two distinct points in $\Gamma_a$ are comparable, we obtain the desired result. 
\end{proof}

 We next prove that $u_a$ is in $BV\big( (0,1)^m \big)$, and we compute 
 the singular part of its distributional derivative. After this has been established, we apply the subgraph perimeter 
 formula from Subsection~\ref{subsec:subgraph-perimeter-formula}, together with the antichain upper bound, to compute \(\mathcal H^m(\Gamma_a)\).
 As below, we let the operation given by the concatenation of tuples be denoted as $\cdot$. 

\begin{lemma}\label{lem:slicing-partial-variation}
 Let $\Omega=(0,1)^m$, and fix \(i\in\{1,2,\ldots,m\}\). Define $ Z=(0,1)^{m-1}$, with the convention  whereby  $Z=(0,1)^0$ is interpreted as a
 one-point measure space of mass $1$. For $  \text{{\bf z}} = (\mathbf z_1,\mathbf z_2,\ldots,\mathbf z_{m-1}) \in Z $
and \(s\in(0,1)\), define
\begin{equation}\label{defineiota}
\iota_i(s,\mathbf z)
= (\mathbf z_1,\mathbf z_2,\ldots,\mathbf z_{i-1}) \cdot 
(s) \cdot ( \text{{\bf z}}_i,\mathbf z_{i+1},\ldots,\mathbf z_{m-1}),
\end{equation}
  omitting empty tuples              for  $i = 1 \wedge i=m$ or
\(m=1\). For a Borel set \(B\subseteq\Omega\), define
$
B_{\mathbf z}^{i}
=
\{s\in(0,1):\iota_i(s,\mathbf z)\in B\}$.
 Let \(u\in L^1(\Omega)\). Define 
\begin{equation}\label{wbfs}
 w_{\mathbf z}(s)=u(\iota_i(s,\mathbf z)). 
\end{equation}
 Suppose that there exists a Borel set
\(Z_0\subseteq Z\) such that $ \mathcal L^{m-1}(Z\setminus Z_0)=0 $ and such that:  For each \(\mathbf z\in Z_0\), we have that 
 $w_{\mathbf z}(s) $ 
belongs to \(BV((0,1))\), and its distributional derivative \(Dw_{\mathbf z}\)
is a nonpositive finite signed Radon measure on \((0,1)\).
For \(\mathbf z\in Z_0\), set
$
\lambda_{\mathbf z}=Dw_{\mathbf z}$, 
and, for $ \text{{\bf z}} \in Z\setminus Z_0$, set
$
\lambda_{\mathbf z}=0$.
Assume that the family \(\{\lambda_{\mathbf z}\}_{\mathbf z\in Z}\) is
measurable in the sense that, for every Borel set \(A\subseteq(0,1)\), the map
$
\mathbf z\mapsto \lambda_{\mathbf z}(A) $
 is $\mathcal{L}^{m-1}$-measurable. Assume, moreover, that
\begin{equation}\label{slicing-integrability-condition}
\int_Z
|\lambda_{\boldsymbol\eta}|((0,1))
\,d\mathcal L^{m-1}(\boldsymbol\eta)
<\infty.
\end{equation}
 Then 
$$
-\int_\Omega u(x)\frac{\partial\varphi}{\partial x_i}(x)\,
d\mathcal L^m(x)
=
\int_\Omega \varphi\,d\nu_i
$$
for every \(\varphi\in C_c^\infty(\Omega)\), and   
\begin{equation}\label{slicing-signed-formula}
\nu_i(B) =   \int_Z
\lambda_{\boldsymbol\eta}(B_{\boldsymbol\eta}^{i})
\,d\mathcal L^{m-1}(\boldsymbol\eta) 
\end{equation}
 for every Borel set
$B\subseteq\Omega$.  Moreover,
\begin{equation}\label{slicing-variation-formula}
|\nu_i|(B)
=
\int_Z
|\lambda_{\boldsymbol\eta}|(B_{\boldsymbol\eta}^{i})
\,d\mathcal L^{m-1}(\boldsymbol\eta).
\end{equation}
Equivalently, after identifying \(D_i u\) with the measure \(\nu_i\), one may
write \(D_i u(B)\) in place of \(\nu_i(B)\) in
\eqref{slicing-signed-formula}.
\end{lemma}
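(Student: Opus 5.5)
Since each $\lambda_{\boldsymbol\eta}$ is nonpositive, $|\lambda_{\boldsymbol\eta}|=-\lambda_{\boldsymbol\eta}$, so the whole proof reduces to a disintegration-plus-Fubini argument. We first \emph{define} $\nu_i$ by the right-hand side of \eqref{slicing-signed-formula}, show that it is a finite signed (in fact nonpositive) Borel measure, and then verify the distributional identity one slice at a time.

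\emph{Step 1: $\nu_i$ is a well-defined finite measure.} Identify $\Omega$ with $(0,1)\times Z$ via the measure-preserving bijection $(s,\mathbf z)\mapsto\iota_i(s,\mathbf z)$. Hypothesis: for each Borel rectangle $A\times Z'$, the map $\mathbf z\mapsto\lambda_{\mathbf z}(A)\chi_{Z'}(\mathbf z)$ is measurable. Let $\mathcal D$ be the family of Borel $B$ for which $\mathbf z\mapsto\lambda_{\mathbf z}(B^i_{\mathbf z})$ is $\mathcal L^{m-1}$-measurable. Since every $\lambda_{\mathbf z}$ is finite, $\mathcal D$ is closed under proper differences and increasing unions, so it is a Dynkin system. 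It contains the $\pi$-system of rectangles, so by the $\pi$--$\lambda$ theorem it contains all Borel sets. Countable additivity of $B\mapsto\int_Z\lambda_{\boldsymbol\eta}(B^i_{\boldsymbol\eta})\,d\mathcal L^{m-1}$ follows from countable additivity of each $\lambda_{\boldsymbol\eta}$ together with monotone convergence, applied to $-\lambda_{\boldsymbol\eta}\ge 0$. Finiteness comes from \eqref{slicing-integrability-condition}.

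\emph{Step 2: the derivative identity.} Let $\varphi\in C_c^\infty(\Omega)$ and write $\varphi_{\mathbf z}(s)=\varphi(\iota_i(s,\mathbf z))$, so that $\varphi_{\mathbf z}\in C_c^\infty((0,1))$ and $\varphi_{\mathbf z}'(s)=\frac{\partial\varphi}{\partial x_i}(\iota_i(s,\mathbf z))$. Since $u\in L^1$ and $\partial_i\varphi$ is bounded, Fubini gives
\[
-\int_\Omega u\,\partial_i\varphi\,d\mathcal L^m=\int_Z\Bigl(-\int_0^1 w_{\mathbf z}\varphi_{\mathbf z}'\,ds\Bigr)d\mathcal L^{m-1}(\mathbf z).
\]
For $\mathbf z\in Z_0$, the inner integral equals $\int_{(0,1)}\varphi_{\mathbf z}\,d\lambda_{\mathbf z}$ by definition of $Dw_{\mathbf z}$. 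The complement $Z\setminus Z_0$ is null, so it can be ignored. It remains to identify $\int_Z\int\varphi_{\boldsymbol\eta}\,d\lambda_{\boldsymbol\eta}\,d\mathcal L^{m-1}$ with $\int_\Omega\varphi\,d\nu_i$. By Step 1 this holds for indicators $\varphi=\chi_B$, hence for simple functions by linearity. Pass to bounded Borel $\varphi$ by uniform approximation, using the bound $|\int\psi_{\boldsymbol\eta}\,d\lambda_{\boldsymbol\eta}|\le\|\psi\|_\infty|\lambda_{\boldsymbol\eta}|((0,1))$, which is integrable by \eqref{slicing-integrability-condition}. This also shows that the iterated integral is measurable in $\boldsymbol\eta$.

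\emph{Step 3: the variation formula.} Each $\lambda_{\boldsymbol\eta}\le 0$, so by \eqref{slicing-signed-formula} we have $\nu_i(B)\le0$ for every Borel $B$; that is, $\nu_i$ is a nonpositive measure. Hence
\[
|\nu_i|(B)=-\nu_i(B)=\int_Z(-\lambda_{\boldsymbol\eta})(B^i_{\boldsymbol\eta})\,d\mathcal L^{m-1}=\int_Z|\lambda_{\boldsymbol\eta}|(B^i_{\boldsymbol\eta})\,d\mathcal L^{m-1},
\]
which is \eqref{slicing-variation-formula}. Finally, $\nu_i$ represents $D_iu$, so the identification $D_iu=\nu_i$ follows by uniqueness of representing measures, since $C_c^\infty$ functions determine finite Radon measures on $\Omega$.

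\emph{Expected obstacle.} The main obstacle is the measurability bookkeeping in Steps 1 and 2. Measurability is assumed only for the maps $\mathbf z\mapsto\lambda_{\mathbf z}(A)$ with $A$ fixed, whereas $B^i_{\mathbf z}$ varies with $\mathbf z$. The monotone class argument applied to rectangles handles this, and it works only because the $\lambda_{\mathbf z}$ are uniformly of one sign and finite. Without the nonpositivity hypothesis, Step 3 would require a genuine disintegration argument for total variations.
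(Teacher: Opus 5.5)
Your proposal is correct and follows essentially the same route as the paper. You define $\nu_i$ by the slice integral, get measurability by a monotone class ($\pi$--$\lambda$) argument from rectangles, obtain countable additivity by monotone convergence applied to $-\lambda_{\mathbf z}\ge 0$, pass from the identity for indicators to smooth $\varphi$ via simple functions, use the one-dimensional distributional identity on $Z_0$ together with Fubini, and read off $|\nu_i|=-\nu_i$ from nonpositivity. The only differences are that you apply Fubini before the identification step and spell out the Dynkin-system details, which does not change the argument.
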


\begin{proof}
  For $ \text{{\bf z}} \in Z_0$, we have
  that \(\lambda_{\mathbf z}=Dw_{\mathbf z}\), while for
\(\mathbf z\in Z\setminus Z_0\), we have that \(\lambda_{\mathbf z}=0\).
 For a Borel set
$B \subseteq\Omega$, the section
\[
B_{\mathbf z}^{i}
=
\{s\in(0,1):\iota_i(s,\mathbf z)\in B\}
\]
is a Borel subset of \((0,1)\). Moreover, by the measurability assumption on
the family \(\{\lambda_{\mathbf z}\}_{\mathbf z\in Z}\), together with the
usual monotone class argument starting from Borel rectangles in \(\Omega\), the
maps
\[
\mathbf z\mapsto \lambda_{\mathbf z}(B_{\mathbf z}^{i})
\quad\text{and}\quad
\mathbf z\mapsto |\lambda_{\mathbf z}|(B_{\mathbf z}^{i})
\]
are \(\mathcal L^{m-1}\)-measurable.

Define a signed set function \(\nu_i\) on the Borel subsets of \(\Omega\) by
\[
\nu_i(B)
=
\int_Z
\lambda_{\boldsymbol\eta}(B_{\boldsymbol\eta}^{i})
\,d\mathcal L^{m-1}(\boldsymbol\eta).
\]
Since each \(\lambda_{\boldsymbol\eta}\) is nonpositive, \(\nu_i\) is also
nonpositive, and
\[
-\nu_i(B)
=
\int_Z
|\lambda_{\boldsymbol\eta}|(B_{\boldsymbol\eta}^{i})
\,d\mathcal L^{m-1}(\boldsymbol\eta)
\]
for every Borel set \(B\subseteq\Omega\).

We next check countable additivity. Let \(\{B_r\}_{r=1}^{\infty}\) be pairwise
disjoint Borel subsets of \(\Omega\). Then, for each
\(\boldsymbol\eta\in Z\),
\[
\left(\bigcup_{r=1}^{\infty}B_r\right)_{\boldsymbol\eta}^{i}
=
\bigcup_{r=1}^{\infty}(B_r)_{\boldsymbol\eta}^{i},
\]
and the union on the right is pairwise disjoint. Since
\(|\lambda_{\boldsymbol\eta}|\) is a positive measure, monotone convergence
gives
\[
\begin{aligned}
-\nu_i\left(\bigcup_{r=1}^{\infty}B_r\right)
&=
\int_Z
|\lambda_{\boldsymbol\eta}|
\left(
\left(\bigcup_{r=1}^{\infty}B_r\right)_{\boldsymbol\eta}^{i}
\right)
\,d\mathcal L^{m-1}(\boldsymbol\eta) \\[4pt]
&=
\int_Z
\sum_{r=1}^{\infty}
|\lambda_{\boldsymbol\eta}|((B_r)_{\boldsymbol\eta}^{i})
\,d\mathcal L^{m-1}(\boldsymbol\eta) \\[4pt]
&=
\sum_{r=1}^{\infty}
\int_Z
|\lambda_{\boldsymbol\eta}|((B_r)_{\boldsymbol\eta}^{i})
\,d\mathcal L^{m-1}(\boldsymbol\eta) \\[4pt]
&=
\sum_{r=1}^{\infty}(-\nu_i(B_r)).
\end{aligned}
\]
Thus \(-\nu_i\) is a positive Borel measure, and hence \(\nu_i\) is a signed
 Radon measure. Also,
\[
-\nu_i(\Omega)
=
\int_Z
|\lambda_{\boldsymbol\eta}|((0,1))
\,d\mathcal L^{m-1}(\boldsymbol\eta)
<\infty
\]
by the assumed integrability condition. Therefore \(\nu_i\) is a finite signed
 Radon measure. Since \(\Omega\) is an open subset of Euclidean space, the
standard regularity theorem for finite Borel measures gives that \(\nu_i\) is a
finite signed Radon measure.

Because \(\nu_i\) is nonpositive, its total variation is \(|\nu_i|=-\nu_i\).
Consequently,  
 the relation $$
|\nu_i|(B)
=
-\nu_i(B)
=
\int_Z
|\lambda_{\boldsymbol\eta}|(B_{\boldsymbol\eta}^{i})
\,d\mathcal L^{m-1}(\boldsymbol\eta) 
$$
 holds for each Borel set \(B\subseteq\Omega\). 
 This proves the asserted total variation formula for \(\nu_i\).

It remains to identify \(\nu_i\) with the distributional partial derivative
\(D_i u\). Let \(\varphi\in C_c^\infty(\Omega)\). By the definition of
\(\nu_i\), first for simple functions and then by approximation using the
integrability of
\(\boldsymbol\eta\mapsto |\lambda_{\boldsymbol\eta}|((0,1))\), we have
\[
\int_\Omega \varphi\,d\nu_i
=
\int_Z
\int_{(0,1)}
\varphi(\iota_i(s,\boldsymbol\eta))
\,d\lambda_{\boldsymbol\eta}(s)
\,d\mathcal L^{m-1}(\boldsymbol\eta).
\]
Since \(Z\setminus Z_0\) has \(\mathcal L^{m-1}\)-measure zero and
\(\lambda_{\boldsymbol\eta}=0\) on \(Z\setminus Z_0\), this equals
\[
\int_{Z_0}
\int_{(0,1)}
\varphi(\iota_i(s,\boldsymbol\eta))
\,dDw_{\boldsymbol\eta}(s)
\,d\mathcal L^{m-1}(\boldsymbol\eta).
\]
For each \(\boldsymbol\eta\in Z_0\), the function
$
s\mapsto \varphi(\iota_i(s,\boldsymbol\eta))
$ 
belongs to \(C_c^\infty((0,1))\). Hence, by the one-dimensional definition of
the distributional derivative of \(w_{\boldsymbol\eta}\),
\[
\begin{aligned}
\int_\Omega \varphi\,d\nu_i
&=
-\int_{Z_0}
\int_{(0,1)}
w_{\boldsymbol\eta}(s)
\frac{\partial}{\partial s}
\bigl(\varphi(\iota_i(s,\boldsymbol\eta))\bigr)
\,ds\,d\mathcal L^{m-1}(\boldsymbol\eta) \\[4pt]
&=
-\int_{Z_0}
\int_{(0,1)}
u(\iota_i(s,\boldsymbol\eta))
\frac{\partial\varphi}{\partial x_i}
(\iota_i(s,\boldsymbol\eta))
\,ds\,d\mathcal L^{m-1}(\boldsymbol\eta).
\end{aligned}
\]
Since \(Z\setminus Z_0\) has \(\mathcal L^{m-1}\)-measure zero, Fubini's theorem
then gives
\[
\begin{aligned}
\int_\Omega \varphi\,d\nu_i
&=
-\int_Z
\int_{(0,1)}
u(\iota_i(s,\boldsymbol\eta))
\frac{\partial\varphi}{\partial x_i}
(\iota_i(s,\boldsymbol\eta))
\,ds\,d\mathcal L^{m-1}(\boldsymbol\eta) \\[4pt]
&=
-\int_\Omega
u(\mathbf x)\frac{\partial\varphi}{\partial x_i}(\mathbf x)
\,d\mathcal L^m(\mathbf x).
\end{aligned}
\]
The final expression is \(D_i u(\varphi)\). Therefore
\[
D_i u(\varphi)=\int_\Omega \varphi\,d\nu_i
\qquad
\text{for every } \varphi\in C_c^\infty(\Omega).
\]
Thus \(D_i u=\nu_i\) as distributions. Hence \(D_i u\) is represented by the
finite signed Radon measure \(\nu_i\), and the signed and total variation
formulas are precisely
\[
\nu_i(B)
=
\int_Z
\lambda_{\boldsymbol\eta}(B_{\boldsymbol\eta}^{i})
\,d\mathcal L^{m-1}(\boldsymbol\eta)
\]
and
\[
|\nu_i|(B)
=
\int_Z
|\lambda_{\boldsymbol\eta}|(B_{\boldsymbol\eta}^{i})
\,d\mathcal L^{m-1}(\boldsymbol\eta),
\]
for every Borel set \(B\subseteq\Omega\).
\end{proof}

\begin{lemma}\label{afterintegral}
 For each $a > 0$, the function \(u_a\) belongs to \(BV((0,1)^m)\), the measure
\(Du_a\) is singular with respect to \(\mathcal L^m\), and
\[
|D^{\operatorname{s}}u_a|\big((0,1)^m\big)=m.
\]
\end{lemma}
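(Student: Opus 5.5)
We will apply Lemma \ref{lem:slicing-partial-variation} in each coordinate direction $i$, and then assemble the partials with Lemma \ref{lem:finite-partials-imply-BV}.

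\textbf{Step 1: the slices.} Fix $i$ and $\mathbf z\in Z=(0,1)^{m-1}$. The slice $w_{\mathbf z}(s)=u_a(\iota_i(s,\mathbf z))$ has the form $w_{\mathbf z}(s)=g_{\mathbf z}(\sigma(s))$. Here
\[
g_{\mathbf z}(t)=\Phi_a\bigl(\sigma(\mathbf z_1),\ldots,\sigma(\mathbf z_{i-1}),t,\sigma(\mathbf z_i),\ldots,\sigma(\mathbf z_{m-1})\bigr).
\]
The function $g_{\mathbf z}$ is smooth and strictly decreasing on $(0,1)$. Its limits are $g_{\mathbf z}(0^+)=1$ and $g_{\mathbf z}(1^-)=0$, since the product $\prod_j \mathbf t_j/(1-\mathbf t_j)$ tends to $0$ and to $\infty$ respectively as $t\to0,1$, the other factors being fixed in $(0,\infty)$. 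Because $\sigma$ is continuous and strictly increasing, $w_{\mathbf z}$ is continuous and strictly decreasing, with total variation exactly $1$ on $(0,1)$. Hence $w_{\mathbf z}\in BV((0,1))$, and $Dw_{\mathbf z}$ is the nonpositive Lebesgue--Stieltjes measure $-\mu_{\sigma}$ reweighted by $|g_{\mathbf z}'(\sigma(s))|$. By the chain rule for Stieltjes measures, $Dw_{\mathbf z}=g_{\mathbf z}'(\sigma(s))\,d\sigma(s)$ on compact subintervals, since $g_{\mathbf z}$ is $C^1$ there. This gives $|Dw_{\mathbf z}|((0,1))=1$, and
\[
|Dw_{\mathbf z}|\ll\mu_\sigma\perp\mathcal L^1
\]
by Lemma \ref{lemmasigma}. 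We take $Z_0=Z$.

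\textbf{Step 2: hypotheses of Lemma \ref{lem:slicing-partial-variation}.} We need measurability of $\mathbf z\mapsto\lambda_{\mathbf z}(A)$. For intervals $A=(c,d]$ this quantity is $w_{\mathbf z}(d)-w_{\mathbf z}(c)$, which is continuous in $\mathbf z$. A monotone class argument then extends measurability to all Borel sets $A$. The integrability condition \eqref{slicing-integrability-condition} holds because the integrand is identically $1$, so the integral equals $1$. The lemma therefore gives $D_iu_a=\nu_i$, a finite nonpositive measure satisfying $|D_iu_a|(\Omega)=1$. Since also $u_a\in L^1$ (it is bounded by $1$), Lemma \ref{lem:finite-partials-imply-BV} yields $u_a\in BV((0,1)^m)$.

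\textbf{Step 3: singularity.} Let $E$ be the Lebesgue-null set from Lemma \ref{lemmasigma} carrying $\mu_\sigma$, and put $B_i=\{\mathbf x\in\Omega:\mathbf x_i\in E\}$. This set is Borel and $\mathcal L^m$-null. Its section $(B_i)^i_{\mathbf z}$ equals $E$, which carries $|\lambda_{\mathbf z}|$. Formula \eqref{slicing-variation-formula} then shows that $|D_iu_a|$ is carried by $B_i$. Consequently $|Du_a|$ is carried by the null set $\bigcup_i B_i$. Thus $Du_a\perp\mathcal L^m$, $\nabla u_a=0$ a.e., and $D^{\operatorname s}u_a=Du_a$.

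\textbf{Step 4: computing $|D^{\operatorname s}u_a|(\Omega)$ (main obstacle).} For a vector measure, $|Du_a|$ is in general only bounded above by $\sum_i|D_iu_a|$, and here that bound is $m$. The difficulty is to show equality, which requires the components $D_iu_a$ to be mutually singular. The plan is to exploit the fact that they are carried by the sets $B_i\setminus\bigcup_{j\neq i}B_j$. Concretely, we would show $|D_iu_a|(B_i\cap B_j)=0$ for $j\neq i$. Using \eqref{slicing-variation-formula}, this quantity is the integral over $\mathbf z$ of $|\lambda_{\mathbf z}|(E)$ taken only over those $\mathbf z$ whose relevant coordinate lies in $E$. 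That set of $\mathbf z$ is $\mathcal L^{m-1}$-null, so the integral vanishes. Once the components are carried by pairwise disjoint sets, they are mutually singular, and their total variations add:
\[
|Du_a|(\Omega)=\sum_{i=1}^m|D_iu_a|(\Omega)=m.
\]
For $m=1$ this step is trivial. The delicate points are verifying the vector total-variation additivity carefully and the monotone class measurability in Step 2.
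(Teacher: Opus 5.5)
Your proposal is correct and follows essentially the same route as the paper: you use the slice-wise chain rule $Dw_{\mathbf z}=(g'_{\mathbf z}\circ\sigma)\,d\sigma$ with total variation $1$, apply the slicing lemma with $Z_0=Z$, show that $|D_iu_a|$ is concentrated on $\{\mathbf x_i\in E\}$, and obtain mutual singularity of the components from the null set $\{\mathbf z:\mathbf z_q\in E\}$, so that the total variations add up to $m$. The differences are only cosmetic. You verify the measurability hypothesis via continuity of $\mathbf z\mapsto w_{\mathbf z}(d)-w_{\mathbf z}(c)$ plus a $\pi$--$\lambda$ argument, whereas the paper uses Borel measurability of the density $(\mathbf z,s)\mapsto g'_{\mathbf z}(\sigma(s))$; and you compute $|D_iu_a|(B_i\cap B_j)$ rather than $|D_iu_a|(S_j)$.
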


\begin{proof}
 We again let $u = u_a\colon (0, 1)^{m} \to \mathbb{R}$ be defined as in 
 \eqref{defineua}. Now, let 
\begin{equation}\label{displaytupx}
 \text{{\bf x}} = (\text{{\bf x}}_{1}, \text{{\bf x}}_{2}, \ldots, \text{{\bf x}}_{m})
\end{equation}
 be an element in $(0, 1)^{m}$, and let $i \in \{ 1, 2, \ldots, m \}$. 
 We then define
\begin{equation}\label{displaytupz}
 \text{{\bf z}} = \text{{\bf z}}(\text{{\bf x}}, i) 
 = (\text{{\bf x}}_{1}, \text{{\bf x}}_{2}, \ldots, \text{{\bf x}}_{i-1}, \text{{\bf x}}_{i+1}, 
 \text{{\bf x}}_{i+2}, \ldots, \text{{\bf x}}_{m}). 
\end{equation}
 Now, define
\begin{equation}\label{Lambdadomain}
 \Lambda_{i,\mathbf x}\colon (0,1)\to(0,1)
\end{equation}
so that, for \(s\in(0,1)\),
\begin{equation}\label{defineLambdaix}
 \Lambda_{i,\mathbf x}(s) =
 u_a\big(
 \mathbf x_1,\mathbf x_2,\ldots,\mathbf x_{i-1},
 s,
 \mathbf x_{i+1},\mathbf x_{i+2},\ldots,\mathbf x_m
 \big).
\end{equation}
 The definitions in \eqref{defineua} and 
 \eqref{defineLambdaix} together give us that $ \Lambda_{i, \text{{\bf x}}}(s) $ $ = $ $ \Phi_{a}\big(
 \sigma\left(\text{{\bf x}}_{1}\right)$, $ \sigma\left( \text{{\bf x}}_{2} \right)$, $ \ldots$, 
 $ \sigma\left( \text{{\bf x}}_{i-1} \right)$, $ \sigma\left( s \right)$, $ 
 \sigma\left( \text{{\bf x}}_{i+1} \right)$, $ \sigma\left( \text{{\bf x}}_{i+2} \right)$, $ \ldots$, $ \sigma\left( \text{{\bf x}}_{m} \right)
 \big)$, 
 again for $\sigma $ as in Lemma \ref{lemmasigma}. In a similar spirit, we define 
\begin{equation}\label{psidomain}
 \psi_{i, \text{{\bf x}}}\colon (0, 1) \to (0, 1) 
\end{equation}
 so that: For a member $t$ of the domain in \eqref{psidomain}, we have that 
\begin{equation}\label{definepsi}
 \psi_{i, \text{{\bf x}}}(t) = \Phi_{a}\left(
 \sigma\left(\text{{\bf x}}_{1}\right), \sigma\left( \text{{\bf x}}_{2} \right), \ldots, 
 \sigma\left( \text{{\bf x}}_{i-1} \right), t, 
 \sigma\left( \text{{\bf x}}_{i+1} \right), \sigma\left( \text{{\bf x}}_{i+2} \right), \ldots, \sigma\left( \text{{\bf x}}_{m} \right)
 \right). 
\end{equation}
 By then setting 
\begin{equation}\label{capA} 
 A = A_{i, \text{{\bf x}}} = a \prod_{j \neq i} \left( 1 - \sigma\left( \text{{\bf x}}_{j} \right) \right) > 0 
\end{equation}
 and 
\begin{equation}\label{capB} 
 B = B_{i, \text{{\bf x}}} = \prod_{j \neq i} \sigma\left( \text{{\bf x}}_{j} \right) > 0, 
\end{equation}
 a combined application of the definitions among \eqref{Phiat}, \eqref{definepsi}, 
 \eqref{capA}, and \eqref{capB} gives us that 
\begin{equation}\label{psiAB} 
 \psi_{i, \text{{\bf x}}}(t) = \frac{A(1-t)}{A(1-t) + B t}. 
\end{equation}
 From \eqref{psiAB}, we find that 
\begin{equation}\label{negderivative}
 \psi_{i, \text{{\bf x}}}(t)' = - \frac{AB}{\left( A(1-t) + B t \right)^2} < 0. 
\end{equation}
 We then find that $\psi_{i, \text{{\bf x}}}$
 is strictly decreasing on its domain, from \eqref{negderivative}. 
 So, since $\sigma$ is strictly increasing and continuous, 
 we find that 
 $\psi_{i, \text{{\bf x}}}(\sigma(s))$ is continuous and strictly decreasing in $s$, 
 i.e., so that $\Lambda_{i, \text{{\bf x}}}(s)$ is continuous and strictly decreasing in $s$. 
 Also observe that $\lim_{t \to 0^{+}} \psi_{i, \text{{\bf x}}}(t) = 1$
 and that $\lim_{t \to 1^{-}} \psi_{i, \text{{\bf x}}}(t) = 0$, with $\sigma(0) = 0$
 and $\sigma(1) = 1$, so that 
 $\lim_{s \to 0^{+}} \Lambda_{i, \text{{\bf x}}}(s) = 1$
 and $\lim_{s \to 1^{-}} \Lambda_{i, \text{{\bf x}}}(s) = 0$. 

 This gives that the one-dimensional total variation for $\Lambda_{i, \text{{\bf x}}}$ over its domain
 satisfies
\begin{equation}\label{onedimensional}
 \left| D\Lambda_{i, \text{{\bf x}}} \right|\big( (0, 1) \big) = 1. 
\end{equation}
 Now, write \(\mu=d\sigma\). For each
\(\mathbf z=(\mathbf z_1,\ldots,\mathbf z_{m-1})\in(0,1)^{m-1}\), define
$
\pfork_{i,\mathbf z}\colon(0,1)\to(0,1)
$ (cf.\ \eqref{definepsi})
by
$$
\pfork_{i,\mathbf z}(t)
=
\Phi_a\left(
\sigma(\mathbf z_1),\ldots,\sigma(\mathbf z_{i-1}) \right) \cdot \left( 
t\right) \cdot \left( 
\sigma(\mathbf z_i),\ldots,\sigma(\mathbf z_{m-1})
\right),
$$
   omitting empty tuples in the manner suggested previously. 
 We also define 
\begin{equation}\label{defineomega}
\omega_{i,\mathbf z}(s)
=
u_a(\iota_i(s,\mathbf z)) 
\end{equation}
 for $s\in(0,1)$. We may rewrite the composition on the right-hand side of \eqref{defineomega} so that 
\begin{equation}\label{omegaforksigma}
\omega_{i,\mathbf z}
=
\pfork_{i,\mathbf z}\circ\sigma.
\end{equation}
 Moreover, if \(\mathbf z=\mathbf z(\mathbf x,i)\), then
$
\pfork_{i,\mathbf z}=\psi_{i,\mathbf x}$ and $ \omega_{i,\mathbf z}=\Lambda_{i,\mathbf x}$. 
 Now, from \eqref{omegaforksigma}, 
the one-dimensional chain rule for Lebesgue--Stieltjes measures gives
\begin{equation}\label{Domega-chain}
D\omega_{i,\mathbf z}
=
(\pfork_{i,\mathbf z}'\circ\sigma)\,\mu .
\end{equation}
 Indeed, for \(0<s<t<1\),
\[
\int_{(s,t]} \pfork_{i,\mathbf z}'(\sigma(r))\,d\mu(r)
=
\int_{\sigma(s)}^{\sigma(t)}\pfork_{i,\mathbf z}'(q)\,dq
=
\pfork_{i,\mathbf z}(\sigma(t))-\pfork_{i,\mathbf z}(\sigma(s)),
\]
and equality on intervals determines the corresponding Lebesgue--Stieltjes
measure.

Thus, for every Borel set \(A\subseteq(0,1)\),
\[
D\omega_{i,\mathbf z}(A)
=
\int_A \pfork_{i,\mathbf z}'(\sigma(s))\,d\mu(s).
\]
By the displayed formula for \(\psi_{i,\mathbf x}'\), and by the analogous
formula for \(\pfork_{i,\mathbf z}'\), the map
$
(\mathbf z,s)\mapsto \pfork_{i,\mathbf z}'(\sigma(s))
$
is Borel measurable.
 Since \(\pfork_{i,\mathbf z}'<0\), we also have that 
$
|D\omega_{i,\mathbf z}|
=
-(\pfork_{i,\mathbf z}'\circ\sigma)\,\mu$.
 Consequently, the maps
$
\mathbf z\mapsto D\omega_{i,\mathbf z}(A)$ and $
\mathbf z\mapsto |D\omega_{i,\mathbf z}|(A)$
 are \(\mathcal L^{m-1}\)-measurable for every Borel set
\(A\subseteq(0,1)\).

 The preceding discussion shows that: For each $ \text{{\bf z}} \in (0,1)^{m-1}$, we have that 
$
\omega_{i,\mathbf z}(s)=u_a(\iota_i(s,\mathbf z))
$
is continuous and decreasing. Hence \(D\omega_{i,\mathbf z}\) is a nonpositive finite Radon measure on
\((0,1)\).

Given \(\mathbf z\in(0,1)^{m-1}\), choose any
$\mathbf x\in(0,1)^m$ such that
$
\mathbf z=\mathbf z(\mathbf x,i)$. 
Then
$
\omega_{i,\mathbf z}=\Lambda_{i,\mathbf x}$. Now, by \eqref{onedimensional},
\begin{equation}\label{onedimensional-omega}
|D\omega_{i,\mathbf z}|((0,1))=1.
\end{equation}
for every \(\mathbf z\in(0,1)^{m-1}\).
 Moreover,
\[
\int_{(0,1)^{m-1}}
|D\omega_{i,\mathbf z}|((0,1))\,d\mathcal L^{m-1}(\mathbf z)
=
\int_{(0,1)^{m-1}}1\,d\mathcal L^{m-1}(\mathbf z)
=
1,
\]
with the convention that \((0,1)^0\) has mass \(1\) when \(m=1\).

Since $0 < u_a < 1$,   we have that   $  u_a$ is in $ L^1((0,1)^m)$. Consequently, Lemma \ref{lem:slicing-partial-variation} applies to \(u=u_a\), with
$$
Z_0=(0,1)^{m-1},
\qquad
w_{\mathbf z}=\omega_{i,\mathbf z},
\qquad   \lambda_{\mathbf z}=D\omega_{i,\mathbf z}. $$ Consequently, the equality 
\begin{equation}\label{sliceformula-Diua}
|D_i u_a|(B) = \int_{(0,1)^{m-1}}
|D\omega_{i,\mathbf z}|
\bigl(\{s\in(0,1):\iota_i(s,\mathbf z)\in B\}\bigr)
\,d\mathcal L^{m-1}(\mathbf z).
\end{equation}
 holds for every Borel set $B\subseteq(0,1)^m$.  Taking $B = (0,1)^m$ in  \eqref{sliceformula-Diua}, we obtain
\begin{equation}\label{latersummand}
\left|D_i u_a\right|\big((0,1)^m\big) = \int_{(0,1)^{m-1}}
|D\omega_{i,\mathbf z}|((0,1))
\,d\mathcal L^{m-1}(\mathbf z)  = 1. 
\end{equation}
 In particular, for each $i \in \{1, 2, \ldots, m\}$, the distributional partial derivative \(D_i u_a\) is represented by a finite signed Radon measure on $(0,1)^m$. Also, 
 since $0 < u_a < 1$, we find that $u_a \in L^1\big( (0,1)^m \big)$. Lemma \ref{lem:finite-partials-imply-BV} thus implies that
$ u_a$ is in $ BV\big( (0,1)^m \big)$. Consequently, the distributional derivative 
 $$ Du_a=(D_1u_a,D_2u_a,\ldots,D_m u_a) $$
is a finite \(\mathbb R^m\)-valued Borel measure on \((0,1)^m\).

 We write $\mu = d \sigma$ in place of the Lebesgue--Stieltjes measure corresponding to $\sigma$. From Lemma \ref{lemmasigma}, we have that $\mu \perp \mathcal{L}^{1}$. So, there 
 exists a Borel set $E \subseteq (0, 1)$ such that both $\mathcal{L}^{1}(E) = 0$ and 
\begin{equation}\label{muconcentrated} 
 \mu\big( (0, 1) \setminus E \big) = 0
\end{equation}
 hold. 

Define
\begin{equation}\label{threeCartesian}
S_i=(0,1)^{i-1}\times E\times(0,1)^{m-i}.
\end{equation}
 Since \(\mathcal L^1(E)=0\), Tonelli's theorem gives
\begin{equation}\label{LmSi0}
\mathcal L^m(S_i)=0.
\end{equation}
 Using \eqref{sliceformula-Diua} with \(B=(0,1)^m\setminus S_i\), we obtain that 
$$ \begin{aligned}
|D_i u_a|((0,1)^m\setminus S_i)
&=
\int_{(0,1)^{m-1}}
|D\omega_{i,\mathbf z}|
\bigl(\{s\in(0,1):\iota_i(s,\mathbf z)\notin S_i\}\bigr)
\,d\mathcal L^{m-1}(\mathbf z) \\[4pt]
&=
\int_{(0,1)^{m-1}}
|D\omega_{i,\mathbf z}|((0,1)\setminus E)
\,d\mathcal L^{m-1}(\mathbf z)
=
0.
\end{aligned}
$$ So, we find that $|D_i u_a|$ is concentrated on $S_i$. Since \(\mathcal L^m(S_i)=0\), it follows that $ D_i u_a\perp\mathcal L^m$. 

 Now, let $i \neq j$. We also let $$ q = q(i, j) = \begin{cases} 
 j, & \text{if $j < i$,} \\
 j - 1, & \text{if $j > i$.} 
 \end{cases} $$
 This gives us that 
\begin{equation}\label{iotaequiv} 
 \iota_{i}(s, \text{{\bf z}}) \in S_{j} \Longleftrightarrow \text{{\bf z}}_{q} \in E. 
\end{equation} 
 The biconditional equivalence in \eqref{iotaequiv} then gives us that $$ \big\{ s \in (0, 1) : \iota_{i}(s, \text{{\bf z}}) \in S_{j} \big\} = \begin{cases} 
 (0, 1), & \text{if $\text{{\bf z}}_{q} \in E$,} \\
 \varnothing, & \text{if $\text{{\bf z}}_{q} \not\in E$.} 
 \end{cases} $$
 Using \eqref{sliceformula-Diua} with \(B=S_j\), we find that $$\left| D_{i}u_{a} \right|(S_{j}) = \int_{ \{ \text{{\bf z}} \in (0, 1)^{m-1} : \text{{\bf z}}_{q} \in E \} } \left| D \omega_{i, 
 \text{{\bf z}}} \right|\big( (0, 1) \big) d\mathcal{L}^{m-1}(\text{{\bf z}}).$$ 

 Since \eqref{onedimensional-omega} holds for every $\mathbf z\in(0,1)^{m-1}$, this gives us that $|D_{i} u_{a}|(S_{j}) = \mathcal{L}^{m-1}\big( \{ \text{{\bf z}} \in (0, 1)^{m - 
 1} : \text{{\bf z}}_{q} \in E \} \big)$. Since $ \big\{ \text{{\bf z}} \in (0, 1)^{m-1} : \text{{\bf z}}_{q} \in E \big\} = (0, 1)^{q-1} \times E \times (0, 1)^{m-1-q}$, and since an application 
 of Tonelli's theorem gives us that $$\mathcal{L}^{m-1}\big( (0, 1)^{q-1} \times E \times (0, 1)^{m-1-q} \big)
 = \mathcal{L}^{1}(E) = 0,$$ 
 we find that $\left| D_{i}u_a \right|(S_{j}) = 0$, i.e., if $i \neq j$. 
 So, since $\left| D_{j} u_a \right|$ is concentrated on $S_{j}$
 and since $\left| D_{i} u_{a} \right|(S_{j}) = 0$, 
 we find that $\left| D_{i} u_{a} \right| \perp \left| D_{j} u_{a} \right|$, again for $i \neq j$. 

 Since expressions of the form $\left| D_{i} u_{a} \right|$ are mutually singular, the total variation of $ Du_a$ $ =$ $ \big( D_1u_a$, $D_2 u_a$, $ \ldots$, $ D_m u_a \big) $ 
 is such that 
\begin{equation}\label{evaluatesummand} 
 \left| Du_a \right|\big( (0, 1)^{m} \big) = \sum_{i=1}^{m} \left| D_i u_a \right|\big( (0, 1)^{m} \big). 
\end{equation}
 So, from \eqref{latersummand} and \eqref{evaluatesummand} together, we find that 
\begin{equation}\label{Duadomain} 
 \left| D u_a \right|\big( (0, 1)^{m} \big) = m. 
\end{equation}

 Since $\left| D_{i} u_{a} \right|$ is concentrated on $S_{i}$, we have that $\left| Du_a \right|$ is concentrated on $S = \bigcup_{i=1}^{m} S_{i}$. From \eqref{LmSi0}, we can 
 conclude that $\mathcal{L}^{m}(S) = 0$, i.e., so that $Du_a \perp \mathcal{L}^{m}$. This together with \eqref{defineDs} gives us that 
\begin{equation}\label{DsuaDua}
 D^{\operatorname{s}}u_a = Du_a. 
\end{equation}
 In turn, the equality in \eqref{DsuaDua} implies that 
\begin{equation}\label{DDm}
 \left| D^{\operatorname{s}} u_a \right|\big( (0, 1)^{m} \big) = \left| Du_a \right|\big( (0, 1)^{m} \big) = m. 
\end{equation}
 So, the equalities in \eqref{Duadomain} and \eqref{DDm} together give us the desired result. 
\end{proof}

 The integral inequality in \eqref{subgraph-lower-bound-graph} provides a key to the below proof. 

\begin{lemma}\label{beforetheorem}
 For each $a > 0$, the evaluation $ \mathcal H^{n-1}(\Gamma_a)=n $ holds for all integers \(n\geq 3\).
\end{lemma}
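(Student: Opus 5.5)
The plan is to squeeze $\mathcal H^{n-1}(\Gamma_a)$ between two bounds that both equal $n$. For the upper bound, Lemma \ref{oneantichain} shows that $\Gamma_a$ is an antichain in $[0,1]^n$. The bound \eqref{upperboundn} of Engel et al.\ (equivalently, \eqref{upperkn} with $k=1$) then gives $\mathcal H^{n-1}(\Gamma_a)\le n$ directly, so no further work is needed there.

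For the lower bound, I would apply \eqref{subgraph-lower-bound-graph} with $\Omega=(0,1)^m$, $m=n-1$, and $u=u_a$. The hypothesis $n\ge 3$ gives $m\ge 2$, which is exactly the range in which the Giaquinta--Modica--Sou\v{c}ek formula was invoked; this explains the restriction in the statement. Three facts are needed:
\begin{itemize}
\item $u_a$ is continuous on $\Omega$, since it is a composition of the continuous map $\sigma$ with the rational function $\Phi_a$, whose denominator is positive on $(0,1)^m$;
\item $u_a\in BV(\Omega)$, by Lemma \ref{afterintegral};
\item $\operatorname{graph}u_a=\Gamma_a$, by the definition \eqref{Gammaa}.
\end{itemize}

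Lemma \ref{afterintegral} also gives $Du_a\perp\mathcal L^m$. Hence, in the decomposition \eqref{defineDs}, the absolutely continuous part vanishes, so $\nabla u_a=0$ $\mathcal L^m$-almost everywhere. The left-hand side of \eqref{subgraph-lower-bound-graph} therefore becomes
\[
\int_{(0,1)^m}\sqrt{1+0}\,d\mathcal L^m+|D^{\operatorname{s}}u_a|\big((0,1)^m\big)=1+m=n,
\]
again using Lemma \ref{afterintegral} for $|D^{\operatorname{s}}u_a|((0,1)^m)=m$. This yields $n\le\mathcal H^{n-1}(\Gamma_a)$, and combined with the upper bound, $\mathcal H^{n-1}(\Gamma_a)=n$.

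The main point to verify carefully is the step $\nabla u_a=0$ a.e., together with the applicability of \eqref{subgraph-lower-bound-graph}. For the former, I would rely on the uniqueness of the Lebesgue decomposition: since $Du_a$ is concentrated on the $\mathcal L^m$-null set $S$ constructed in the proof of Lemma \ref{afterintegral}, the density $\nabla u_a$ must vanish almost everywhere. For the latter, the derivation of \eqref{subgraph-lower-bound-graph} requires that the relative topological boundary of $\mathcal S_{u_a}$ in $\Omega\times\mathbb R$ be exactly the graph, and continuity of $u_a$ on the open cube guarantees this. Everything else is assembly of earlier results.
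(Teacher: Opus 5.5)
Your proposal is correct and follows the paper's proof essentially step for step. The upper bound $\mathcal H^{n-1}(\Gamma_a)\le n$ comes from Lemma~\ref{oneantichain} together with \eqref{upperboundn}. The lower bound comes from applying \eqref{subgraph-lower-bound-graph} to the continuous function $u_a$, with Lemma~\ref{afterintegral} giving $\nabla u_a=0$ almost everywhere and $|D^{\operatorname{s}}u_a|((0,1)^m)=m$, so the left-hand side equals $1+m=n$.
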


\begin{proof}
 Recall that $m=n-1$, and set $ \Omega=(0,1)^m$. By Lemma~\ref{afterintegral}, we have that $u_a$ is in $ BV(\Omega)$ and that $ |D^{\operatorname{s}}u_a|(\Omega) = m$. 
 Moreover, Lemma~\ref{afterintegral} gives us that $Du_a \perp \mathcal{L}^m$. Hence the absolutely continuous part of \(Du_a\) in the decomposition $$ Du_a = \nabla u_a \, 
 \mathcal{L}^m+D^{\operatorname{s}}u_a $$ vanishes, so that $ \nabla u_a=0$ holds almost everywhere on $\Omega$, with respect to $\mathcal{L}^{m}$. 
 Since $\mathcal{L}^m(\Omega)=1$, it follows that $$ \int_{\Omega}\sqrt{1+|\nabla u_a|^2}\,d\mathcal L^m + |D^{\operatorname{s}}u_a|(\Omega) = 1+m = n. $$ 
 Now $u_a$ is continuous on $\Omega$, and $\Gamma_a$ is precisely the graph of \(u_a\) over \(\Omega\). Therefore the lower bound \eqref{subgraph-lower-bound-graph}, applied 
 to $u = u_a$, gives 
\begin{equation}\label{nleqH} 
 n = \int_{\Omega}\sqrt{1+|\nabla u_a|^2}\,d\mathcal L^m + |D^{\operatorname{s}}u_a|(\Omega) \leq \mathcal H^m(\Gamma_a).
\end{equation}
 On the other hand, Lemma~\ref{oneantichain} shows that $\Gamma_a$ is an antichain in $[0,1]^n$. Hence the antichain upper bound \eqref{upperboundn} gives 
 $ \mathcal H^{n-1}(\Gamma_a)\leq n$. Since \(m=n-1\), this is the same as 
\begin{equation}\label{Hleqn} 
 \mathcal H^m(\Gamma_a)\leq n.
\end{equation}
 From \eqref{nleqH} and \eqref{Hleqn}, we obtain that $ \mathcal{H}^m(\Gamma_a)=n$, recalling that $m=n-1$. 
\end{proof}

\begin{theorem}
 For all positive integers $n$ and $k$, there exists a $k$-antichain \(A\subseteq[0,1]^n\) such that $ \mathcal H^{n-1}(A)=kn$.
\end{theorem}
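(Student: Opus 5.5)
The plan is to superpose $k$ of the antichains $\Gamma_a$ from Lemma~\ref{oneantichain}, one for each of $k$ distinct parameter values, after disposing of the small dimensions separately. For $n=1$ the statement is trivial. There, $\mathcal H^{0}$ is counting measure and $[0,1]$ is itself a chain, so any $k$ distinct points of $[0,1]$ form a $k$-antichain $A$ with $\mathcal H^{0}(A)=k=kn$. For $n\geq 2$ I fix parameters $0<a_1<a_2<\cdots<a_k$ and set
\[
A=\bigcup_{j=1}^{k}\Gamma_{a_j}.
\]

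First I show the graphs $\Gamma_{a_j}$ are pairwise disjoint. From the rewritten form $\Phi_a(\mathbf t)=\bigl(1+a^{-1}\prod_j \mathbf t_j/(1-\mathbf t_j)\bigr)^{-1}$, the value $\Phi_a(\mathbf t)$ is strictly increasing in $a$ for fixed $\mathbf t\in(0,1)^m$. Hence $u_{a_1}(\mathbf x)<\cdots<u_{a_k}(\mathbf x)$ for every $\mathbf x\in(0,1)^m$, so the graphs are disjoint.

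Next, $A$ is a $k$-antichain. If $C$ is a chain, then $C\cap\Gamma_{a_j}$ is a chain inside the antichain $\Gamma_{a_j}$, so it has at most one point by Lemma~\ref{oneantichain}. Therefore $\mathcal H^{0}(A\cap C)\leq\sum_{j}\mathcal H^0(C\cap\Gamma_{a_j})\leq k$.

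Each $\Gamma_{a_j}$ is the graph of a continuous function on $(0,1)^m$, so it is relatively closed in $(0,1)^m\times\mathbb R$ and in particular Borel. Hausdorff measure is countably additive on Borel sets, so
\[
\mathcal H^{n-1}(A)=\sum_{j=1}^k\mathcal H^{n-1}(\Gamma_{a_j}).
\]
For $n\geq3$, Lemma~\ref{beforetheorem} gives $\mathcal H^{n-1}(\Gamma_{a_j})=n$ for each $j$, so $\mathcal H^{n-1}(A)=kn$.

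The remaining and main obstacle is $n=2$, i.e.\ $m=1$. Lemma~\ref{beforetheorem} was stated only for $n\geq 3$ because the cited subgraph perimeter formula requires base dimension at least $2$. One option is simply to invoke the $n=2$ case proved by Pelekis and Vlas\'ak. To keep the argument self-contained I would instead prove directly that $\mathcal H^1(\Gamma_a)=2$ when $m=1$, as follows.

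\begin{itemize}
\item In this case $u_a=\Phi_a\circ\sigma$ is a continuous, strictly decreasing function from $(0,1)$ onto $(0,1)$.
\item The map $t\mapsto(t,u_a(t))$ is an injective continuous curve, so $\mathcal H^1(\Gamma_a)$ equals its length.
\item The length is at most the sum of the total variations of the two coordinates, namely $1+1=2$.
\item For the lower bound, Lemma~\ref{afterintegral} shows $Du_a$ is singular. So for every $\varepsilon>0$ the measure $|Du_a|$ is carried, up to mass $\varepsilon$, by a finite union of disjoint intervals of total Lebesgue length less than $\varepsilon$.
\item Take a partition subordinate to these intervals and their complementary gaps. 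On the chosen intervals the curve rises or falls by almost $1$ in total in the $y$-direction. On the gaps it advances by almost $1$ in total in the $x$-direction.
\item By the triangle inequality, the polygonal length for this partition is at least $2-O(\varepsilon)$.
\end{itemize}

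Alternatively, the upper bound $\mathcal H^1(\Gamma_a)\leq 2$ also follows from \eqref{upperboundn}. Either way $\mathcal H^1(\Gamma_a)=2$, and the same superposition argument as above yields a $k$-antichain in $[0,1]^2$ with $\mathcal H^1(A)=2k$, completing all cases.
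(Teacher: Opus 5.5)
Your proposal is correct. For $n=1$ and $n\geq 3$ it is the paper's proof essentially verbatim: the union $A=\bigcup_{j}\Gamma_{a_j}$, disjointness from the strict monotonicity of $\Phi_a$ in $a$, additivity of $\mathcal H^{n-1}$ on the Borel graphs, Lemma~\ref{beforetheorem}, and the fact that a chain meets each antichain $\Gamma_{a_j}$ in at most one point.

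The one real difference is $n=2$. The paper just cites Pelekis and Vlas\'ak, whereas you run the same construction with $m=1$ and compute $\mathcal H^1(\Gamma_a)=2$ as an arc length. That computation is sound:
\begin{itemize}
\item The normalized $\mathcal H^1$-measure of the image of an injective continuous curve is its length.
\item The upper bound is $\operatorname{Var}(x)+\operatorname{Var}(u_a)=2$.
\item For the lower bound, $Du_a=(\Phi_a'\circ\sigma)\,d\sigma$ is carried by an $\mathcal L^1$-null set. Hence, up to mass $\varepsilon$, it is carried by finitely many open intervals of total length $<\varepsilon$. Over each such interval, the increment of the continuous monotone $u_a$ equals its $|Du_a|$-mass, while the gaps give horizontal displacement $1-O(\varepsilon)$.
\item Intervals abutting $0$ or $1$ should be shrunk slightly so that the partition lies in $(0,1)$. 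This costs only $o(1)$, since $|Du_a|$ is finite and atomless.
\end{itemize}

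Your route gives a self-contained proof that treats every $n\geq 2$ with the single family $\Gamma_a$. It also shows that the restriction $n\geq 3$ in Lemma~\ref{beforetheorem} is only an artifact of the cited form of the subgraph-perimeter formula. Your curve-length argument is its one-dimensional analogue, and it does not even need \eqref{upperboundn}. The paper's route is shorter but depends on an external result for that case.
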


\begin{proof}
 If $n = 1$, then the order relation imposed on $[0, 1]^{n}$ is linear. So, by choosing a set $A \subseteq [0, 1]$ consisting of any $k$ points in $[0, 1]$, every chain in $[0, 1]$ meets $A$ 
 in at most $k$ points, so that $\mathcal{H}^{0}(A) = k = k n$.

 For $n = 2$, this two-dimensional case is proved by Pelekis--Vlas\'ak \cite{PelekisVlasak2020}.

 Now, let $n \geq 3$, again with $m = n - 1$. We then choose distinct positive numbers $a_1$, $a_2$, $\ldots$, $a_k$, and we proceed to define 
\begin{equation}\label{defineAunion}
 A = \bigcup_{j=1}^{k} \Gamma_{a_{j}}. 
\end{equation}
 From the definition in \eqref{Phiat}, we find that: If $\text{{\bf x}} \in (0, 1)^{m}$ is fixed, then $u_{a}(\text{{\bf x}})$ is strictly increasing as a function of $a > 0$. This gives us, 
 recalling \eqref{Gammaa}, that the graphs $\Gamma_{a_{1}}$, $\Gamma_{a_{2}}$, $\ldots$, $\Gamma_{a_{k}}$ are pairwise disjoint. Also observe that: Since $\Gamma_a$ is the graph 
 of a continuous function, we have that $\Gamma_a$ is a Borel set. So, since each of $\Gamma_{a_{1}}$, $\Gamma_{a_{2}}$, $\ldots$, $\Gamma_{a_{k}}$ is a Borel set, the Hausdorff 
 measure $\mathcal{H}^{n-1}$ is additive on the finite disjoint union of the given Borel sets. So, the definition in \eqref{defineAunion} together with Lemma \ref{beforetheorem} give 
 us that $$\mathcal{H}^{n-1}(A) = \sum_{j=1}^{k} \mathcal{H}^{n-1}(\Gamma_{a_{j}}) = k n.$$ From Lemma \ref{oneantichain}, since each expression of the form $\Gamma_{a_{j}}$ 
 is an antichain, we find that: If $C \subseteq [0, 1]^{n}$ is a chain, then $C \cap \Gamma_{a_{j}}$
 contains at most one point, for an arbitrary index $j$. Consequently, the relations
 $$ \operatorname{card}(A \cap C) \leq \sum_{j=1}^{k} \operatorname{card}(C \cap \Gamma_{a_{j}}) \leq k $$
 hold. So, since $A \subseteq [0, 1]^{n}$ is such that 
 $\operatorname{card}(A \cap C) \leq k$ for every chain $C$
 and is such that $\mathcal{H}^{n-1}(A) = k n$, we obtain the desired result. 
 \end{proof}

\subsection*{Acknowledgements}
 The author acknowledges extensive interactions with GPT-5.5 Pro during the exploratory and proof-development stages of this work. All AI-generated suggestions were 
 substantially revised, corrected, and independently verified by the author, who assumes full responsibility for the mathematical content.

\end{document}